\newtheorem{theorem}{Theorem}
\newtheorem{lemma}{Lemma}[section]
\newtheorem{proposition}{Proposition}
\newtheorem{cor}{Corollary}
\def\beq{ \begin{equation} }
\def\eeq{ \end{equation} }
\def\beqa{\begin{eqnarray}}
\def\eeqa{\end{eqnarray}}
\def\beqax{\begin{eqnarray*}}
\def\eeqax{\end{eqnarray*}}
\def\mn{\medskip\noindent}
\def\ms{\medskip}
\def\ep{\epsilon}
\def\square{\vcenter{\vbox{\hrule height .4pt
  \hbox{\vrule width .4pt height 5pt \kern 5pt
        \vrule width .4pt} \hrule height .4pt}}}
\def\ZZ{\mathbb{Z}}
\def\bb{\mathbb}
\begin{document}

\title{A first order phase transition in \\
the threshold $\theta\ge 2$ contact process on \\
random $r$-regular graphs and $r$-trees}
 \author{Shirshendu Chatterjee\\
Courant Institute of Mathematical Sciences, New York University \\
251 Mercer Street, New York, N.Y. 10012-1185  
\thanks{This work was part of his Ph.D.~thesis written at Cornell University.}\\
and\\
Rick Durrett \\
James B.~Duke Professor of Mathematics \\
Box 90320, Duke University, Durham, NC 27708-0320
\thanks{Both authors were partially supported by grants DMS 0704996 and DMS 1005470 from the
probability program at NSF.} }

\maketitle

\begin{abstract}
We consider the discrete time threshold $\theta \ge 2$ contact process on a
random $r$-regular graph on $n$ vertices. In this process, a vertex
with at least $\theta$ occupied neighbors at time $t$ will be occupied at
time $t+1$ with probability $p$, and vacant otherwise. We show that
if $\theta \ge 2$ and $r \ge \theta+2$, $\ep_1$ is small and $p \ge p_1(\ep_1)$, then
starting from all vertices occupied the fraction of occupied
vertices is $\ge 1-2\ep_1$ up to time $\exp(\gamma_1(r)n)$ with
probability $\ge 1 - \exp(-\gamma_1(r)n)$. In the other direction,
we show that for $p_2<1$ there is an $\ep_2(p_2)>0$ so that if $p
\le p_2$ and the number of occupied vertices in the initial
configuration is $\le \ep_2(p_2)n$, then with high probability all
vertices are vacant at time $C_2(p_2) \log n$. These two conclusions
imply that on the random $r$-regular graph there cannot be a
quasi-stationary distribution with density of occupied vertices in
$(0,\ep_2(p_1))$, and allow us to conclude that the process on the
$r$-tree has a first order phase transition.
\end{abstract}

 \vfill
 \noindent
 {\bf AMS 2010 subject classifications: } Primary 60K35; secondary 05C80. \\
 {\bf Keywords: } threshold-two contact process, random regular graphs, isoperimetric inequality,
Binomial large deviations.
 \clearpage

\section{Introduction}

The linear contact process was introduced by Harris in (1974) and has been studied extensively since then, see part I of Liggett (1999). In that model the state of the system is $\xi_t : \ZZ^d \to \{0,1\}$ where 1 = occupied and 0 = vacant. Occupied sites become vacant at rate 1, while a vacant site becomes occupied at rate $\lambda k$ if it has $k$ occupied neighbors.

In this paper, we will be concerned with particle systems that are versions of the contact process with sexual reproduction. In our first two models, occupied sites become vacant at rate 1. Perhaps the most natural generalization is the quadratic contact process in which a vacant site with $k$ occupied neighbors becomes occupied at rate $\lambda \binom{k}{2}$. However, we will primarily be concerned with the the  threshold-$\theta$ contact process in which a vacant site become occupied at rate $\lambda$ if it has $k \ge \theta$ occupied neighbors. The threshold-1 contact process has been studied and found to have the same qualitative behavior as the linear contact process, so we expect that the threshold-2 and quadratic contact process will as well.  

Being attractive processes, each of our models with sexual reproduction have a translation invariant upper invariant measure, $\xi^1_\infty$, that is the limit of the system starting from all 1's. See Liggett (1985, 1999) for more details about this and the results we cite in the questions below. There are three basic questions for our model.

\mn
Q1. Let $\xi^p_t$ be system starting from product measure with density $p$, i.e., $\xi^p_0(x)$ are independent and $= 1$ with probability $p$. Does $\xi^p_t$ die out for small $p$? That is, do we have $P(\xi_t^p(x)=1) \to 0$ if $p \le p_0(\lambda)$?

\mn
Q2. Let $\rho(\lambda) = P( \xi^1_\infty(x) = 1)$ and let $\lambda_c = \inf\{ \lambda : \rho(\lambda) > 0 \}$. Is $\rho(\lambda)$ discontinuous at $\lambda_c$?  If so, then soft results imply that $P( \xi^1_\infty(x)=1 ) > 0$ when $\lambda=\lambda_c$.

\mn
Q3. Let $\xi^{0,\beta}_\infty$ be the limit as $t\to\infty$ for the system starting from all 0's when there are spontaneous births at rate $\beta$. Is $\lim_{\beta\to 0} P( \xi^{0,\beta}_\infty(x) = 1 ) = 0$? If so, we say that 0 is stable under perturbation, and it follows that there are two nontrivial stationary distributions when $\beta >0$ is small. 

\medskip
One of the first processes with sexual reproduction that was studied is Toom's NEC (north-east-center) rule. In its original formulation, see Toom (1974, 1980), the states of the sites were 1 and $-1$. Let $e_1$, $e_2$ be the two unit vectors. If the majority of the spins in $\{ x, x+e_1, x + e_2 \}$ was 1 at time $n$ then $\xi_{n+1}(x)=1$ with probability $1-p$ and 0 with probability $p$. If the majority of the spins  was $-1$ at time $n$ then $\xi_{n+1}(x)=-1$ with probability $1-q$ and 0 with probability $q$. If $p+q$ is small then the system has two stationary distributions, see e.g., Bennett and Grinstein (1985).

More relevant for us, is the reformulation of Toom's rule as a growth model:
\begin{align*}
1 \to 0 & \quad\hbox{rate 1} \\
0 \to 1 & \quad\hbox{rate $\lambda$ if $\xi_t(x+e_1)$ and $\xi_t(x+e_2)$ are both 1}
\end{align*}
If all the 1's in the initial configuration are inside a rectangle then there will never be any births outside the rectangle, so if we let
$$
\lambda_f = \inf \{ \lambda : P( \xi^A_t \neq \emptyset \hbox{ for all $t$ }) > 0
$$
be the critical value for survival from a finite set, then $\lambda_f=\infty$. 

Durrett and Gray (1985) used the contour method to prove, see announcement of results in Durrett (1985), $\lambda_c \le 110$.

\mn
If $p < p^* = 1 - $ the critical value for oriented bond percolation, then the process starting from product measure with density $p$ dies out. 

\mn
If $\lambda > \lambda_c$ and $\beta$ is such that $6 \beta^{1/4} \lambda^{3/4} < 1$ then when we add spontaneous births at rate $\beta$ there are two stationary distributions.

\medskip
Chen (1992, 1994) generalized Toom's growth model. He begins by defining  
\begin{center}
\begin{tabular}{cccc}
pair 1 & pair 2 & pair 3 & pair 4 \\
$x-e_1, x-e_2$ & $x+e_1, x-e_2$ & $x+e_1, x+e_2$ & $x-e_1, x+e_2$ 
\end{tabular}
\end{center}
The models are numbered by the pairs that can give birth: Type I (pair 1 = SWC); Type IV (any pair); Type III (pairs 1, 2, and 3); Type 2A (pairs 1 and 2); and Type 2B (pairs 1 and3).
Chen (1992) proved for Model IV that if $0 < p < p(\lambda)$ then
$$
P( 0 \in \xi^p_t ) \le t^{-c \log_{2\lambda}(1/p)}
$$
and for large $\lambda$
$$
\lim_{\beta\to 0} P( 0 \in \xi^{0,\beta}_\infty ) > 0
$$
so 0 is unstable under perturbation. In contrast, Chen (1994) shows that in model III 0 is stable under perturbation.  

Durrett and Neuhauser (1994) considered the behavior of the quadratic contact process, with stirring  (exchange of values at adjacent sites). They had deaths at rate 1, and births at rate $\beta$ times the fraction of adjacent pairs that are occupied. The mean field equation (which assumes adjaceent sites are independent) in this case:
$$
\frac{du}{dt} = - u + \beta (1-u)u^2
$$
has $\beta_c = 4$ and $\beta_f=\infty$. They showed that in the limit of fast stirring both critical values converged to 4.5. This threshold is the point where the pde 
$$
\frac{du}{dt} = u'' - u + \beta u (1-u)
$$ 
has traveling wave solutions $u(t,x)=w(x-ct)$ with $c>0$.  Based on simulations they conjectured that the phase transition was continuous. 

Evans, Guo, and Liu (in various permutations in five papers published in 2007--2009) have considered the quadratic contact process in which particles hop at rate $h$ (i.e., move according to the rules of the simple exclusion process, which for unlabeled particles is the same as sitrring). Birth rates are (1/4) times the number of adjacent pairs of occupied sites, deaths occur at rate $p$. Having $h>0$ means that $p_f(h)>0$. When $h < h_0$ is small $p_f(h) < p_e(h)$ the model has a discontinuous phase transition. and 0 is stable under perturbation. When $h \ge h_0$, $p_f(h) =p_e(h)$ and the phase transition is continuous 

The last three authors call their system Schl\"ogl's second model in honor of his (1972) paper which  introduced a model with a nonnegative integer number of particles per sites defined by the chemical reactions
$$
2X \rightleftharpoons 3X \qquad X \rightleftharpoons 0
$$
i.e., at a site with $k$ particles births occur at rate $c_0 + c_2 \binom{k}{2}$ and deaths occur at rate
$c_1 k + c_3 \binom{k}{3}$, and particles jump to a randomly chosen neighbor at rate $\nu$ each.
The system in which 
$$
X \rightleftharpoons 2X \qquad X \rightleftharpoons 0
$$
is Schl\"ogl's first model. It is the analogue of the linear contact process, or if you are a physicist, they are
in the same universality class.
Grassberger (1981) simulated a version of the second model in which the reaction $3X \to 2X$ was replaced
by the restriction of at most two particles per site, and in which doubly occupied sites give birth onto adjacent sites.
He found that this model has a second order (continuous) phase transition. See also Grassberger (1982),
which has been cited more than 300 times, or Prakash and Nicolis (1997) for a more recent treatment.

The threshold-$\theta$ contact process with $\theta \ge 1$ has been studied on $\ZZ^d$. Liggett (1994) used it and a comparison to show coexistence in a threhsold voter model. See also Chapter II.2 in Liggett (1999). Handjani (1997) studied the phase diagram of the model, while Mountford and Schonmann (2008) studied asymptotics for its critical values. However outside the physics literature, see da Silva and de Oliveira (2011), there are no results about the nature of the phase transition on $\ZZ^d$. As we explain later, Fontes and Schonmann (2008a) have considered the process on a tree.   

In this paper we will consider the discrete time threshold-$\theta$ contact process on a random $r$-regular graph, and on trees in which all vertices have degree $r$. In these processes, but sites that have at least two occupied neighbors at time $n$ are occupied with probability $p$ at time $n+1$. Our personal motivation, derived from participating in the 2010--2011 SAMSI program on Complex Networks, is that a random $r$-regular graph is a toy model for a
social network. This model, like the original small world graph of Watts and Strogatz (1998), is unrealistic because all vertices have
the same number of neighbors. We do not expect the qualitative behavior to change on an Erd\"os-Renyi graph, but this graph looks locally like a Galton-Watson tree so the proofs considerably more complicated. 

To see that properties of the model are sensitive to the degree distribution, recall that Chatterjee and Durrett (2009) have shown that if one studies the constact process on a random graph with a power law degree distribution, $p_k \sim C k^{-\alpha}$, then the critical value is 0 for any $\alpha < \infty$. It is an interesting question to determine whether or not the contact process
has positive critical value when the degree distribution has an exponential tail $p_k \sim C\exp(-\gamma k)$. Simulations of Chris Varghese suggest that the quadratic contact process on an Erd\"os-Renyi random graph have a discontinuous transition, but on the power-law graph in which $p_k = C k^{-2.5}$ for $k \ge 3$, $\lambda_c=0$ and the transition is continuous.

Our second motivation for exploring particle systems on a random $r$-regular graph is that it is the natural finite version of a
$r$-tree (in which each vertex has degree $r$). We think of a random regular graph as a ``tree torus", since the graph looks the same (in distribution) when viewed from any vertex. While the inspiration came from aesthetics, there is a practical consequence: the results for the random $r$-regular graph give as corollaries results for the threshold-$\theta$ contact process on a tree.

\subsection{Defining the process on the random graph}

In this paper, we study the behavior of the discrete time {\it
threshold-two contact process} on a random $r$-regular graph on $n$
vertices. We construct our random graph $G_n$ on the vertex set $V_n
:= \{ 1, 2, \ldots n \}$ by assigning $r$ ``half-edges" to each of
the vertices, and then pairing the half-edges at random. If $r$ is
odd, then $n$ must be even so that the number of half-edges, $rn$,
is even to have a valid degree sequence. Let $\bb P$ denote the
distribution of $G_n$, which is the first of several probability measures we
will define. We condition on the event $E_n$ that the
graph is simple, i.e., it does not contain a self-loop at any vertex,
or more than one edge between two vertices. It can be shown (see
e.g., Corollary 9.7 on page 239 of Janson, \L uczak and Rucin\'nski)
that $\bb P(E_n)$ converges to a positive limit as $n\to\infty$, and hence
 \beq \label{Ptilde}
\text{ if } \tilde{\bb P}:= \bb P(\cdot| E_n), \text{ then }
 \tilde{\bb P}(\cdot) \le c \bb P(\cdot) \text{ for some constant }
 c=c(r)>0.
\eeq
 So the conditioning on the event $E_n$ will not have much effect on the
distribution of $G_n$. It is easy to see that the distribution of $G_n$ under $\tilde{\bb P}$ is uniform over the
collection of all undirected $r$-regular graphs on the vertex set
$V_n$. We choose $G_n$ according to the distribution $\tilde{\bb P}$
on simple graphs, and once chosen the graph remains fixed through
time.

Having defined the graph, the next step is to define the dynamics on the graph.
We write $x\sim y$ to mean that $x$ is a neighbor of $y$, and let
\beq \label{N} {\cal N}_y:=\{x\in V_n: x\sim y\} \eeq be the set of
neighbors of $y$. The distribution $P_{G_n,p}$ of the (discrete
time) threshold-two contact process $\xi_t \subseteq V_n$ with
parameter $p$ conditioned on $G_n$ can be described as follows:
\begin{align*}
& P_{G_n,p}\left( x\in\xi_{t+1} \;  \left| \; |{\cal N}_x \cap \xi_t| \ge \theta \right.\right) =  p \text{ and } \\
& P_{G_n,p}\left( x\in\xi_{t+1}  \; \left| \; |{\cal N}_x \cap
\xi_t| < \theta \right.\right)  =  0,
\end{align*}
where the decisions for different vertices at time $t+1$ are taken
independently. Let $\xi_t^A \subseteq V_n$ denote the threshold-two contact process
starting from $\xi_0^A=A$, and let $\xi^1_t$ denote the special case
when $A=V_n$.

$P_{G_n,p}$ is sometimes called the {\it quenched measure}.
It is the distribution of the process conditioned on the graph.
The {\it annealed measure} $\mathbf P_p$ in which we average over the values of $G_n$ is defined by
$$
\mathbf P_p(\cdot) = \tilde{\bb E} P_{G_n,p} (\cdot),
$$
where $\tilde{\bb E}$ is the expectation corresponding to the
probability distribution $\tilde{\bb P}$.

\subsection{Main results}

The first step is to prove that threshold-2 contact process dies out for small $p$ values and survives
for $p$ close to 1. It is easy to see that on any graph in which all
vertices have degree $r$ the threshold-two contact process dies out
rapidly if $p<1/r$. An occupied site has at most $r$ neighbors that
it could cause to be occupied at the next time step, so
$E_{G_n,p}\xi^1_t \le n (rp)^t$. Our next result shows that if $r\ge
4$, $p$ is sufficiently close to 1, and if we start from all 1's, then with high probability the fraction of occupied
vertices in the threshold-two contact process $\ge 1-\ep_1$ for an
exponentially long time.

\begin{theorem}\label{p_c}
Suppose $\theta \ge 2$ and $r\ge \theta + 2$. There are constants $\ep_1, \gamma_1 > 0$,
and a good set of graphs ${\cal G}_n$ with $\tilde{\bb P}(G_n \in
{\cal G}_n ) \to 1$ so that if $G_n \in {\cal G}_n$ and $p \ge  p_1=
1 - \ep_1/(3r-3\theta)$, then
$$
P_{G_n,p}\left(\inf_{t\le \exp(\gamma_1n)}
\frac{|\xi^1_t|}{n} < 1-2\ep_1\right) \le \exp(-\gamma_1n).
$$
\end{theorem}

\noindent
Here and in what follows, all constants will depend on the degree $r$ and threshold $\theta$. If they depend on
other quantities this will be indicated.

The reason for the restriction to $r \ge \theta + 2$ comes from Proposition
\ref{isoper2}. When $r=\theta+1$ it is impossible to pick $\eta>0$ so that
$(1+\eta)/(r-\theta) < 1$. There may be more than algebra standing in the
way of constructing a proof. We conjecture that the result is false
when $r=\theta+1$. To explain our intuition in the special case $\theta=2$, consider a rooted binary tree
in which each vertex has two descendants and hence, except for the
root, has degree three. If we start with a density $u$ of 1's on
level $k$ and no 1's on levels $m<k$, then at the next step the
density will be $g(u)= pu^2 < u$ on level $k-1$. When there are
three descendants, then
$$
g(u) = p ( 3u^2 (1-u) + u^3 ),
$$
which has a nontrivial fixed point for $p \ge 8/9$ (divide by $u$ and solve the quadratic equation).

As the next result shows, there is a close relationship between the
threshold-$\theta$ contact process $\xi_t$ on a random $r$-regular graph
and the corresponding process $\zeta_t$ on the homogeneous $r$-tree.
Following the standard recipe for attractive interacting
particle systems, if we start with all sites on the tree occupied then
sequence of distributions decreases to a limit
$\zeta^1_\infty$, which is called the {\it upper invariant
measure}, since it is the stationary distribution with the most 1's.
The critical value is defined by
$$
p_c = \sup\{ p : P_p(\zeta^1_\infty(x) = 1 ) = 0 \},
$$

\begin{cor} \label{pctree}
Suppose $\theta \ge 2$, $r \ge \theta+2$ and that $p_1$ and $\ep_1$ are the constants
in Theorem \ref{p_c}. If $p \ge p_1$, then there is a translation
invariant stationary distribution for the threshold-two contact
process on the homogeneous $r$-tree in which each vertex is occupied
with probability $\ge 1-\ep_1$.
\end{cor}

\noindent
Fontes and Schonmann (2008a) have considered the continuous time threshold-$\theta$ contact process
on a tree in which each vertex has degree $b+1$ and have shown that if 
$b$ is large enough then $\lambda_c < \infty$. Our result improves their result by removing the restriction
that $b$ is large.

\subsubsection{Dying out from small density}

If we set the death rate = 0 in the threshold-$\theta$ contact process then the birth rate is
no longer important and the process reduces to bootstrap percolation. Balogh and Pittel (2007)
have studied bootstrap percolation on the random regular graph. They identified an interval
$[p_-(n),p_+(n)]$ so that the probability that all sites end up active goes sharply from
0 to 1. The limits $p_{\pm}(n) \to p_*$ and $p_+-p_-$ is of order $1/\sqrt{n}$.  
If bootstrap percolation cannot fill up the graph then it seems that process with deaths will
be doomed to extinction. The next result proves this, and more importantly extends the result
to arbitrary initial conditions with a small density of occupied sites.

Here, since processes with larger $\theta$ have fewer points, it is enough to prove the result
when $\theta=2$.

\begin{theorem}\label{th1}
Suppose $\theta\ge 2$ and $p_2<1$. There are constants $0< \ep_2
,C_2<\infty$ that depend on $p_2$, and a good set of
graphs ${\cal G}_n$ with $\tilde{\bb P}(G_n \in {\cal G}_n ) \to 1$
so that if $G_n \in {\cal G}_n$, then for any $p\le p_2$, and any
subset $A\subset V_n$ with $|A|\le \ep_2 n$,
$$
P_{G_n,p} \left( \xi^A_{C_2\log n} \ne \emptyset\right) \le
2/n^{1/6} \text{ for large enough $n$}.
$$
\end{theorem}

The density of 1's $\rho(p) = P_p(\zeta^1_\infty(x)=1)$ in the the
stationary distribution on the homogeneous $r$-tree is a
nondecreasing function of $p$. The next result shows that the
threshold-two contact process on the tree has a discontinuous phase
transition.

\begin{cor} \label{disco}
Suppose $\theta \ge 2$, let $p_1$ be the constant from Theorem \ref{p_c}, and let $\ep_2$ be the constant from Theorem \ref{th1}. 
$\rho(p)$ never takes values in $(0,\ep_2(p_1))$.
\end{cor}

\noindent
This result, like Theorem \ref{th1} does not require the assumption $r \ge \theta + 2$.
On the other hand if $\rho(p)\equiv 0$ for $r \le \theta +1$ the result is not very interesting in that case.
Again Fontes and Schonmann (2008a) have proved that the threshold-$\theta$ contact
process has a discontinuous transition when the degree $b+1$ is large enough.

Fontes and Schonmann (2008b) have studied $\theta$-bootstrap percolation on trees
in which each vertex has degree $b+1$ and $2\le \theta \le b$. They show that there is
a critical value $p_f$ so that if $p<p_f$ then for almost every initial configuration 
of product measure with density $p$, the final bootstrapped configuration does
not have any infinite components. This suggests that we might have $\ep_2(p)$
bounded away from 0 as $p \to 1$.

\subsubsection{Stability of 0}

The previous pair of results are the most difficult in the paper. From their proofs one easily gets
results for the process with sponataneous births with probability $\beta$, i.e., after the threshold-$\theta$
dynamics has been applied to the configuration at time $n$, we independently make vacant sites occupied with probability $\beta$.
Adding a superscript $0,\beta$ to denote the new process starting from all 0's, we have the following:

\begin{theorem}\label{th3}
Suppose $\theta \ge 2$. There is a good set of graphs ${\cal G}_n$ with $\tilde{\bb P}(G_n \in
{\cal G}_n ) \to 1$ so that for $\beta < \beta_3$, if $G_n \in {\cal G}_n$ and $p < 1$ then 
there are constants $C_3(p)$ and $\gamma_3(p,\beta)$ so that
$$
P_{G_n,p}\left(\sup_{t\le \exp(\gamma_1n)}
\frac{|\xi^{0,\beta}_t|}{n} > C_3\beta \right) \le 2\exp(-\gamma_3n).
$$
\end{theorem}

Let $\zeta^{0,\beta}_\infty$ be the limiting distribution for the process on the tree, which exists via
monotonicity. 

\begin{cor} \label{c3}
If $\theta \ge 2$ and $p<1$ then $\lim_{\beta\to 0} P( \zeta^{0,\beta}_\infty(x) = 1 ) = 0$.
\end{cor}

\subsection{Isoperimetric inequalities} \label{isopint}

We now describe the results that are the keys to the proof. 
Let $\partial U  :=  \{y\in U^c: y\sim x \text{ for some } x\in U\}$
be the boundary of $U$, and given two sets $U$ and $W$, let $e(U,W)$
be the number of edges having one end in $U$ and the other end in
$W$. Given an $x$ let $n_U(x)$ be the number of neighbors of $x$
that are in $U$ and let
$$
U^{*j} = \{ x \in V_n : n_U(x) \ge j \}.
$$
The keys to the proof of Theorem \ref{p_c} and \ref{th1} are ``isoperimetric inequalities"
that we will state in this section.

The estimation of the sizes of $e(U,U^c)$ is an enormous subject
with associated key words Cheeger's inequality and expander graphs.
Bollob\'as (1988) proved the following result for random regular
graphs:

\begin{theorem} Let $r \ge 3$ and $0<\eta < 1$ be such that
$$
2^{4/r} < (1-\eta)^{1-\eta} (1+\eta)^{1+\eta}.
$$
Then asymptotically almost surely a random $r$-regular graph has
$$
\min_{|U| \le n/2} \frac{e(U,U^c)}{|U|} \ge (1-\eta) r/2.
$$
\end{theorem}

\noindent To see that the constant is reasonable, choose $n/2$
vertices at random to make $U$. In this case we expect that $|e(U,
U^c)| = nr/2$.

While this result is nice, it is not really useful for us, because
we are interested in estimating the size of the boundaries $U^{*j}$
for $j\ge 2$, and in having better constants by only
considering small sets.

\begin{proposition} \label{isoper0}
Let $E^{*1}(m,\le k)$ be the event that there is a subset $U\subset
V_n$ with size $|U|=m$ so that $|U^{*1}| \le k$. There are constants
$C_0$ and $\Delta_0$ so that for any
$\eta>0$, there is an $\ep_0(\eta)$ which also depends on $r$ so
that for $m \le \ep_0(\eta)n$,
$$
\bb P\left[E^{*1}(m,\le (r-1-\eta)m)\right] \le
C_0\exp\left(-\frac{\eta^2}{4r} m \log(n/m) +\Delta_0m \right).
$$
\end{proposition}

This result yields the two results we need to prove Theorems
\ref{p_c} and \ref{th1}. To obtain the first, note that if $W=V_n
\setminus \xi_t$ is the set of vacant vertices at time $t$, and $j=\theta - 1$ then
$W^{*(r-j)}$ is the set of vertices which will certainly be vacant
at time $t+1$. From the definitions it is easy to see that if
$|W|=m$, then
$$
rm \ge \sum_{y\in W^{*1}} e(\{ y\},W) \ge |W^{*1} \setminus W^{*(r-j)}|
+(r-j)|W^{*(r-j)}| = |W^{*1}| + (r-j-1)|W^{*(r-j)}|.
$$
So for any set $W$ of size $m$, if $|W^{*(r-j)}| \ge k$, then
$|W^{*1}| \le rm-(r-j-1)k$. Taking $k=m(1+\eta)/(r-j-1)$ so that
$rm - (r-j-1) k = (r-1-\eta)m$ and using Proposition \ref{isoper0} we get

\begin{proposition} \label{isoper2}
Let $E^{*(r-j)}(m,>k)$ be the event that there is a subset $W\subset
V_n$ with size $|W|=m$ so that $|W^{*(r-j)}| > k$. For the constants
$C_0$, $\Delta_0$, and $\ep_0(\eta)$ given in Proposition
\ref{isoper0} and $m \le \ep_0(\eta)n$,
$$
\bb P\left[E^{*(r-j)}\left(m,>\left( \frac{1+\eta}{r-j-1} \right)m
\right)\right] \le C_0\exp\left(-\frac{\eta^2}{4r} m \log(n/m)
+\Delta_0m \right).
$$
\end{proposition}

To derive our second result, we note that if $|U|=m$, then
$$
rm \ge \sum_{y\in U^{*1}} e(\{ y\},U) \ge |U^{*1} \setminus U^{*2}|+2|U^{*2}| = |U^{*1}| + |U^{*2}|.
$$
So for any set $U$ of  size $m$, if $|U^{*2}| \ge k$, then $|U^{*1}| \le rm-k$.
Taking $k=(1+\eta)m$ we get

\begin{proposition} \label{isoper1}
Let $E^{*2}(m,> k)$ be the event that there is a subset $U\subset
V_n$ with size $|U|=m$ so that $|U^{*2}| > k$. For any $\eta>0$ and
the constants $C_0$, $\Delta_0$, and $\ep_0(\eta)$ given in
Proposition \ref{isoper0} if  $m \le \ep_0(\eta)n$, then
$$
\bb P\left[E^{*2}(m,> (1+\eta)m)\right] \le
C_0\exp\left(-\frac{\eta^2}{4r} m \log(n/m) +\Delta_0m \right).
$$
\end{proposition}

\section{Upper bound on the critical value $p_c$}\label{p_csec}

\begin{proof} [Proof of Theorem \ref{p_c}] Recall that $r \ge \theta+2$.
Let $\eta=1/3$ and let $\ep_1= \exp(-8\Delta_0 r/\eta^2)$ so that if $m=[\ep_1 n]$, then $\log(n/m) = 8 \Delta_0 r /\eta^2$
and hence $(\eta^2/4r) \log(n/[\ep_1 n]) = \Delta_0$.
With these choices, Proposition \ref{isoper2} implies
$$ 
\bb P\left[E^{*(r-j)}\left([\ep_1 n] ,> \frac{4[\ep_1 n]}{3(r-j-1)}\right)\right]  
\le C_0\exp\left(- \Delta_0  \ep_1 n \right)
$$
Let ${\cal G}_n = E^{*(r-j)}(m,\le(1+\eta)m/(r-j-1))$. Since
increasing the size of a set $U$ increases $U^{*\theta}$, it follows that
if $G_n \in {\cal G}_n$ and $|U| \ge (1 - \ep_1) n$, then
$$
|U^{*\theta}| \ge \left(1 - \frac{4\ep_1}{3(r-\theta)}  \right) n.
$$
If $|\xi_t| \ge (1-\ep_1)n$ and $p \ge 1 - \ep_1/(3r-3\theta)$,
then the distribution of $|\xi_{t+1}|$ dominates a
$$
\hbox{Binomial}\left( \left(1 - \frac{4\ep_1}{3(r-\theta)} \right) n, p
\right) \hbox{ distribution, which has mean }
\ge \left(1- \frac{5\ep_1}{3(r-\theta)} \right) n.
$$
(the $\ep_1^2$ term is $>0$).
When $r \ge \theta+2$, this is $> (1-\ep_1)n$ so standard large deviations
for the Binomial distribution imply that there is a constant
$\gamma_1(r)>0$ so that
$$
P_{G_n,p}\Bigl( |\xi_{t+1}| < (1-\ep_1)n \Bigl| |\xi_t| \ge
(1-\ep_1) n \Bigr) \le \exp(-2\gamma_1 n).
$$
If we set $T=\lfloor \exp(\gamma_1 n)\rfloor$, then the probability
that $|\xi_{t+1}| \ge (1-\ep_1)n$ fails for some $t\le T$ is $\le
\exp(-\gamma_1 n)$, which completes the proof of Theorem \ref{p_c}.
\end{proof}

To prepare for the proof of the Corollary we need the following
result which shows that the random regular graph looks locally like
a tree. See e.g., Lemma 2.1 in Lubetsky and Sly (2010).

\begin{lemma} \label{treelike}
Suppose $r \ge 3$ and let $R = (1/3) \log_{r-1} n$. For any $x$ the probability that the
collection of points within distance $R$ of $x$ differs from the
homogeneous $r$-tree is  $\le 10 n^{-1/3}$ for large $n$.
\end{lemma}

\begin{proof} Starting with $x$ its neighbors are chosen by selecting $r$ half edges at random from the $rn$ possible options. This procedure continues to select the neighbors of the neighbors, etc. To generate all of the connections out to distance $R$
we will make 
$$
r[1+(r-1)+ \cdots + (r-1)^{R-1}] \le rn^{1/3}/(r-2) \quad\hbox{choices.}
$$
The probability that at some point we select a vertex that has already been touched is
$$ 
\le \frac{rn^{1/3}}{r-2} \cdot \frac{rn^{1/3}/(r-2)}{n-rn^{1/3}/(r-2)} \le 10n^{-1/3}
$$
for large $n$.
\end{proof}

\begin{proof} [Proof of Corollary \ref{pctree}]
Let $r\ge \theta+2$, $p \ge p_1$, and $t(n) = \log\log n$. To prove that
the upper invariant measure is nontrivial we will show that
$\lim_{n\to\infty} P_{p_1}( \zeta^1_{t(n)}(0)=1 ) \ge 1-\ep_1$. To
do this note that Lemma \ref{treelike} together with a standard
second moment applied to $H_n =$ the number of vertices whose neighbors are tree-like up to distance $\log \log n$
implies that $\tilde{\bb P} ( H_n  \le (1-\ep_1) n) \to 0$.  So
we can choose $G_n\in{\cal G}_n$ having this property. For such
a $G_n$ Theorem \ref{p_c} implies
$$
\liminf_{n\to\infty} \frac{1}{n} \sum_{x=1}^n P_{G_n,p_1}
(\xi^1_{t(n)}(x) = 1) \ge 1-\ep_1.
$$
Now the state of $x$ at time $t(n)$ can be determined by looking at
the values of the process on the space-time cone $\{ (y,s) : d(x,y)
\le t(n)-s \}$. Since the space-time cones corresponding to $n-o(n)$
many points of $G_n$ are same as that corresponding to 0 of the
homogeneous $r$-tree, the desired result follows.
\end{proof}

\section{Extinction from small density, stability of 0}

\begin{proof}[Proof of Theorem \ref{th1}]
Pick $\eta=\eta(p_2)>0$ so that $(1+\eta)p_2 = (1-3\eta)$ and then
pick $\ep_2 = \exp(-8\Delta_0 r/\eta^2)$ so that for $m\le \ep_2 n$
we have
$$
\bb P[E^{*2}(m ,>(1+\eta)m ] \le C_0\exp\left(-\frac{\eta^2}{8r} m
\log(n/m) \right).
$$
Let ${\cal G}_n$ be the collection of graphs so that $E^{*2}(m ,\le(1+\eta)m)$ holds for all $m\le \ep_2 n$.
To see that when $n$ is large this event has high probability, note that
\begin{align*}
\bb P( {\cal G}_n^c )  & \le \sum_{m=n^{a}+1}^{\ep_2 n}
C_0 \exp\left(-\frac{\eta^2}{8r} n^{a}\log(1/\ep_2)\right)
 + \sum^{n^{a}}_{m=1} C_0 \exp\left(-\frac{\eta^2}{8r} \log(n^{1-a})\right) \\
& \le C_0 \ep_2 n \exp\left(-\frac{\eta^2}{8r} n^{a}\log(1/\ep_2)\right)
+ C_ n^a n^{-\eta^2(1-a)/8r} \to 0
\end{align*}
if $a$ is chosen small enough.

As in the proof of Theorem \ref{p_c}, we will use large deviations
for the Binomial distribution to control the behavior of the
process. However, this time the value of $p$ changes with $m$, and
we will have to stop when the set gets too small. According to Lemma
2.8.5 in Durrett (2007)

\begin{lemma}
If $X=\hbox{Binomial}(k,q)$, then
$$
P( X \ge k(q+z) ) \le \exp(-kz^2/2(q+z)).
$$
\end{lemma}

\noindent Using this result with $k=(1+\eta)m$ and $q=p_2$ which
have $kq=(1-3\eta)m$, then taking $z = \eta < 2\eta/(1+\eta)$ so
that $k(q+z) \le (1-\eta)m$, it follows that for $m \le \ep_2 n$ and
$G_n \in {\cal G}_n$, 
\beq 
P_{G_n,p}\Bigl( |\xi_{t+1}| > (1-\eta) m
\Bigl| |\xi_t| = m \Bigr) \le \exp( - \eta^2 m / 2(p_2+\eta) ).
\label{expdie} \eeq 
Using this result $\ell = \lceil(1/2) \log
n/(-\log(1-\eta))\rceil$ times, we see that if $|\xi_0| \le \ep_2 n$
and $\nu:=\inf\{t: |\xi_t| \le n^{1/2}\}$, then with high
probability $\nu \le \ell$.

To finish the process off now we note that when $m \le \ep_2 n$,
 \beq \label{Expbd}
 E_{G_n,p}\Bigl( |\xi_{t+1}| \Bigl| |\xi_t|=m \Bigr) \le (1-3\eta)m.
 \eeq
 Also note that if $\kappa = \lceil(2/3) \log
n/(-\log(1-3\eta))\rceil$ and $G_n \in {\cal G}_n$, then
 $$ |\xi_{\nu+t}| \le (1+\eta)^t n^{1/2} \le n^{5/6} \text{ for }
 1 \le t\le \kappa, \text{ as } (1+\eta)^2(1-3\eta) < 1 \text{ for any $\eta>0$.}$$
 So using the inequality in \eqref{Expbd} $\kappa$ times
 we have $P_{G_n, p}(|\xi_{\nu+\kappa}| \ge 1) \le 1/n^{1/6}$. So combining with
 \eqref{expdie} we conclude that if $|\xi_0| \le \ep_2 n$ and $G_n \in {\cal G}_n$, then
$$
P_{G_n,p}(|\xi_{\kappa+\ell}| \ge 1 ) \le 2/n^{1/6} \text{ for large
enough $n$},
$$
which proves the desired result with $C_2 = 2/(-\log(1-\eta))$.
\end{proof}

\begin{proof}[Proof of Corollary \ref{disco}]
Suppose that the upper invariant measure for the process on the
homogeneous $r$-tree has $\rho(p) \in (0,(1-3\delta)\ep_2(p_1,r))$
for some $\delta>0$. It is easy to see that $\ep_2(p_1,r) <
1-\ep_1$, and so it follows from Corollary \ref{pctree} that
$p<p_1$. Pick a time $\tau$ so that the threshold-two contact
processes on the homogeneous $r$-tree has $P_p( \zeta^1_\tau(0) = 1)
< (1-2\delta)\ep_2(p_1,r)$. The argument involving Lemma
\ref{treelike} in the proof of Corollary \ref{pctree} can be
repeated to see that we can choose $G_n\in {\cal G}_n$ so that the
neighborhoods of $n-o(n)$ many points of $G_n$ within distance
$\tau$ look exactly like the analogous neighborhood of 0 in the
homogeneous $r$-tree. If $n$ is large, then for the above choices of
$\tau$ and $G_n$,
$$
\frac{1}{n} \sum_{x=1}^n P_{G_n,p} (\xi^1_{\tau}(x) = 1) \le
(1-\delta) \ep_2(p_1,r).
$$
Since points on $G_n$ separated by more than $2\tau$ are independent
in $\xi^1_\tau$, it follows that with $P_{G_n,p}$-probability
tending to 1 as $n \to \infty$
$$
\sum_{x=1}^n \xi^1_{\tau}(x) \le \ep_2(p_1,r) n.
$$
Formula (\ref{expdie}) implies that after $\ell = \lceil (\log \log
n)/(-\log(1-\eta(p_1)))\rceil$ time units
$$
P_{G_n,p}\left(\sum_{x=1}^n \xi^1_{\tau+\ell}(x) \le  n/\log
n\right) \to 1 \text{ as $n\to\infty$}.
$$
So by our choice of $G_n$ we have $P_p( \zeta^1_{\tau+\ell}(0) = 1)
\le \rho(p)/2$. Since by monotonicity $P_p(\zeta^1_t(0)=1)$ is a
decreasing function of $t$, we get  a contradiction that proves the
desired result.
\end{proof}

\begin{proof}[Proof of Theorem \ref{th3}]
From (\ref{expdie}) and a standard large deviations result for the Binomial,
it follows that there is a constant $\delta_1(p,\beta)$ for $m \le \ep_2 n$ and $G_n \in {\cal G}_n$, so that
\beq 
P_{G_n,p}\Bigl( |\xi^{0,\beta}_{t+1}| > (1-\eta) m + 2\beta (n-m) )
\Bigl| |\xi^{0,\beta}_t| = m \Bigr) \le \exp( - 2\delta m  ).
\eeq 
Let $\bar m = (1-\eta-2\beta) m + 2\beta n$, and $\alpha = 2\beta/(\eta+2\beta)$. 
If $m = \alpha n$ then $\bar m = m$, while if $m \ge 2\alpha n$
$$
\bar m - \alpha n = (1-\eta-2\beta) (m-\alpha n)
$$
and hence
$$
\bar m \le \left( 1 - \frac{\eta+2\beta}{2} \right) m.
$$   
The probability for the number of particles to jump from $m = 2 \alpha n$ 
to $\ge 3 \alpha n$ is $\le \exp(-\delta_2(p,\beta)n)$, so by monotonicity this is true for $m\le 2 \alpha n$.
The desired result with $C_3 = 3 \alpha$ and $\gamma_3 = \min\{\delta_1,\delta_2\}$ follows easily from these observations.
\end{proof}

\begin{proof}[Proof of Corollary \ref{c3}] 
Suppose that $P( \zeta^{0,\beta}_\infty(x) = 1 ) \ge 5 \alpha$. If so then there is
a time $\tau$ at which $P( \zeta^{0,\beta}_\infty(x) = 1 ) \ge 4 \alpha$.
The argument involving Lemma
\ref{treelike} in the proof of Corollary \ref{pctree} can be
repeated to see that we can choose $G_n\in {\cal G}_n$ so that the
neighborhoods of $(1-\alpha)n$ many points of $G_n$ within distance
$\tau$ look exactly like the analogous neighborhood of 0 in the
homogeneous $r$-tree. But them we have a contradiction with the result in
Theorem \ref{th3}.
\end{proof}

\section{Estimates for $e(U,U^c)$ and $|\partial U|$} \label{lennas}

We begin with a simple estimate for the number of subsets of $V_n$
of size $m$.
\begin{lemma}\label{subsetsizebd}
 The number of subsets of $V_n$ of size $m$ is at most
 $\exp(m\log(n/m)+m)$.
 \end{lemma}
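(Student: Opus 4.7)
The plan is to prove the standard elementary bound $\binom{n}{m} \le (en/m)^m$ and then take logarithms.

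First I would write the number of subsets of size $m$ as the binomial coefficient $\binom{n}{m} = \frac{n!}{m!(n-m)!}$ and use the crude inequality $\binom{n}{m} \le \frac{n^m}{m!}$, obtained by bounding the falling factorial $n(n-1)\cdots(n-m+1)$ by $n^m$.

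Next I would use the lower bound $m! \ge (m/e)^m$. This can be derived from the series expansion $e^m = \sum_{k \ge 0} m^k/k! \ge m^m/m!$, which gives $m! \ge m^m/e^m = (m/e)^m$. Substituting yields
\[
\binom{n}{m} \le \frac{n^m}{(m/e)^m} = \left(\frac{en}{m}\right)^m.
\]

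Taking logarithms gives $\log \binom{n}{m} \le m \log(n/m) + m$, which is exactly the claim after exponentiation. There is no real obstacle here; the only choice is which standard bound on $m!$ to invoke, and the exponential series bound is the cleanest because it avoids Stirling's formula and its error terms.
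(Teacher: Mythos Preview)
Your proof is correct and matches the paper's own argument essentially line for line: bound the falling factorial by $n^m$, bound $m!$ below via $e^m \ge m^m/m!$, and conclude $\binom{n}{m} \le (ne/m)^m = \exp(m\log(n/m)+m)$.
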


\noindent
{\it Proof.} The number of subsets of $V_n$ of size $m$ is ${n\choose m}$. Using
$n(n-1)\cdots(n-m+1) \le n^m$ and $e^m > m^m/m!$,
 $$
{n\choose m} \le \frac{n^m}{m!} \le \left(\frac{ne}{m}\right)^m = \exp(m\log(n/m)+m).
\eqno\square
$$

 In order to study the distribution of $|\partial U|$, the first step is to estimate
 $e(U,U^c)$. Because of the symmetries of our random graph $G_n$, the distribution of $e(U,U^c)$ under $\bb P$ depends on $U$ only through $|U|$.

\begin{lemma}\label{crossedge}
There are numerical constants $C_{\ref{crossedge}}$ and
$\Delta_1=r(2 + 1/e)+3/2$ so that if $U$ is a subset of $V_n$ with
$|U|=m$ and $\alpha\in[0,1]$, then
 $$
\bb P(e(U,U^c) \le \alpha r|U|) \le
C_{\ref{crossedge}}\exp\left(-\frac{r}{2}(1-\alpha)m\log(n/m) +
\Delta_1 m \right).
$$
\end{lemma}

\begin{proof} Let $f(u)$ be the number of ways of pairing $u$ objects. Then
Stirling's formula $n! \sim (n/e)^{n} \sqrt{2\pi n}$ implies
$$
f(u)=\frac{u!}{(u/2)!2^{u/2}} \sim \sqrt{2} (u/e)^{u/2},
$$
and it follows from the limit result that $C_1 (u/e)^{u/2} \le f(u) \le  C_2 (u/e)^{u/2}$
for all integers $u$.

If $q(m,s)= \bb P(e(U,U^c)=s)$, then we have
$$
q(m,s)\le  {rm\choose s} {r(n-m)\choose s} s! \frac{f(rm-s)f(r(n-m)-s)}{f(rn)}.
$$
 To see this, recall that we construct the graph $G_n$ by pairing the
 half-edges at random, which can be done in $f(rn)$ many ways as
 there are $rn$ half-edges. We can choose the left endpoints of
 the edges from $U$ in ${rm\choose s}$ many ways, the right endpoints
 from $U^c$ in ${r(n-m)\choose s}$ many ways, and pair them in $s!$ many ways. The
 remaining $(rm-s)$ many half-edges of $U$ can be paired among
 themselves in $f(rm-s)$ many ways. Similarly the remaining
 $(r(n-m)-s)$ many half-edges of $U^c$ can be paired among
 themselves in $f(r(n-m)-s)$ many ways.

To bound $q(m,s)$ we will use an argument from Durrett (2007) that
begins on the bottom of page 161 and we will follow it until the
last display before (6.3.6). To make the connection we note that
their $p(m,s) = \binom{n}{m} q(m,s)$ and write $D=rn,
k=rm$ and $s=\eta k$ for $\eta \in [0,1]$ to get
 \beq\label{eq1}
q(m,s)\le C k^{1/2} \left(\frac{e^2}{\eta}\right)^{\eta k}
 \left(\frac{k}{D}\right)^{k(1-\eta)/2}
 \left(1-\frac{(1+\eta)k}{D}\right)^{(D-(1+\eta)k)/2}.
\eeq
A little calculus gives
 \beq\label{phi} \text{if } \phi(\eta) = \eta\log(1/\eta), \text{ then } \phi'(\eta)=-(1+\log\eta) \text{ and }
\phi''(\eta) = -\frac 1\eta.
\eeq
So $\phi(\cdot)$ is a concave function and its derivative vanishes at $1/e$.
This shows that the function $\phi(\cdot)$
is maximized at $1/e$, and hence
\beq
0 \le \eta \log(1/\eta) \le 1/e \quad\hbox{ for $\eta \in [0,1]$}.
\label{xlogxbd}
\eeq
So $(e^2/\eta)^{\eta k}\le B^k$ with $B=exp(2+1/e)$. If we ignore the last term of \eqref{eq1},
which is $\le 1$, then we have
 \begin{align*}
 \bb P(e(U,U^c) \le \alpha rm)
\le  & \sum_{\{\eta: \; \eta rm \in\bb{N},\, \eta \le \alpha\}}
 C(rm)^{1/2} B^{rm} \left(\frac mn\right)^{rm(1-\eta)/2}\\
& \quad  \le  C r^{3/2} m^{3/2} B^{rm} \left(\frac mn\right)^{r(1-\alpha)m/2},
 \end{align*}
 as there are at most $rm$ terms in the sum and $(m/n)^{1-\eta}\le
 (m/n)^{1-\alpha}$ for $\eta\le\alpha$. The above
 bound is
$$
\le C\exp\left(-\frac{r}{2}(1-\alpha)m \log(n/m) + rm\log B + 3m/2\right)
$$
and we have the desired result.
\end{proof}

Lemma \ref{crossedge} gives an upper bound for the probability that
$e(U,U^c)$ is small. Our next goal is to estimate the
difference between $e(U,U^c)$ and $|\partial U|$.

\begin{lemma}\label{dampen}
If $U$ is a subset of vertices of $G_n$ such that $|U|=m$, then
there is a constant $\Delta_2$ that depends only on $r$ and an
$\ep_{4.3}(\eta)$ which also depends on $r$ so that for any $0 <
\eta \le u \le r$, and $m \le \ep_{4.3}(\eta)n$,
$$
\bb P\left(\left.|\partial U|\le (u-\eta) |U| \; \right|  \, e(U,U^c)=u|U|\right)
\le  \exp(-\eta m\log(n/m)  + \Delta_2m).
$$
\end{lemma}

\begin{proof} To construct $e(U,U^c)$ we choose $um$ times from the set of $r(n-m)$ half edges
attached to $U^c$. We want to show that with high probability at
least $(u-\eta)m$ vertices of $U^c$ are touched. To do this it is
enough to show that if the half-edges are chosen one by one, then
with high probability at most $\eta m$ of them are attached to one
of the already touched vertices. We will call the selection of
half-edge associated with a vertex that has already been touched a
bad choice. At any stage in the process there are at most $(r-1)um$
bad choices among at least $r(n-m)-um$ choices. Thus the number of
bad choices is stochastically dominated by
$$
X \sim \hbox{Binomial}\left( N = um, p = \frac{(r-1)um}{r(n-m)-um}
\right).
$$
When $u \le r$ and $m\le n/3$, we have $r(n-m) - um \ge r(n-2m) \ge rn/3$ and hence
$$
p \le \frac{(r-1)u}{r/3} \cdot \frac{m}{n} \le \frac{\eta}{u}
$$
when $m \le \ep_{4.3}(\eta)n$.

A standard large deviations result for the Binomial distribution,
see e.g., Lemma 2.8.4 in Durrett (2007) implies $P( X \ge Nc ) \le
\exp(-NH(c))$ for $c>p$, where \beq H(c) = c \log\left( \frac{c}{p}
\right) + (1-c) \log \left( \frac{1-c}{1-p} \right). \label{Bldpb}
\eeq When $c=\eta/u$, the first term in the large deviations bound
\eqref{Bldpb}
\begin{align*}
\exp(-Nc\log(c/p)) & \le \exp\left( - um \cdot \frac{\eta}{u} \left[
\log(n/m) + \log(\eta)
+ \log\left(\frac{r/3}{u^2(r-1)} \right) \right] \right) \\
& \le \exp[ - \eta m \log (n/m) + (m/e) + m \eta  \log(3r(r-1)) ]
\end{align*}
by \eqref{xlogxbd}. For the second term in the large deviations
bound \eqref{Bldpb} we note that $1/(1-p) > 1$ and use
\eqref{xlogxbd} to conclude
$$
\exp\left(-N(1-c)\log \left( \frac{1-c}{1-p} \right) \right) \le
\exp\left(-N(1-c)\log(1-c) \right) \le \exp(um/e),
$$
which proves the desired result for
$\Delta_2=(r+1)/e+r\log(3r(r-1))$.
\end{proof}

\section{Proof of Proposition \ref{isoper0}} \label{isoperpf}

We begin by recalling some definitions given earlier and make two
new ones. Let $\partial U  :=  \{y\in U^c: y\sim x \text{ for some }
x\in U\}$ be the boundary of $U$, and given disjoint sets $U$ and
$W$ let $e(U,W)$ be the number of edges between $U$ and $W$. Given a
vertex $x$, let $n_U(x)$ be the number of neighbors of $x$ that are
in $U$ and let $U^{*1} = \{ x \in V_n : n_U(x) \ge 1 \}$. Let $U_0 =
\{ x \in U: n_U(x)=0 \}$ be the set of isolated vertices in $U$, and
let $U_1 = U - U_0$.

\begin{proof}
Given $\eta>0$ define the following events:
\begin{align}
A_U  & = \{|U_1|\ge (\eta/2r)|U|\},
\nonumber \\
B_U  &=  \{|U^{*1}|\le(r-1-\eta)|U|\},
\label{Uevents} \\
D_U  &=  \{e(U,U^c) \le (r-2-\eta)|U|\}. \nonumber
\end{align}
There are three steps in the proof.

\ms I. Estimate the probability of $F_1 = \cup_{\{U : |U|=m\}}
\left(B_{U}\cap A_{U}^c\right).$

\ms II. Estimate the probability of $ F_2 = \cup_{\{W: (\eta/2r) m
\le |W| \le  m\}} D_{W}.$

\ms III. Estimate the probability of $F_3 = \cup_{\{U : |U|= m\}}
B_U \cap F_1^c \cap F_2^c$.

\mn
{\it Step I:}  On the event $A_U^c$, $|U_0|>(1-\eta/2r)|U|$ and so $e(U,U^c)\ge r|U_0|\ge
 (r-\eta/2)|U|$. Also on the event $B_U$, $|\partial U| \le |U^{*1}| \le
 (r-1-\eta)|U|$. From these two observations we have
\begin{align}
 \bb P(B_U\cap A_U^c)
 & \le \bb P( |\partial U|\le (r-1-\eta)|U| ,  e(U,U^c)\ge (r-\eta/2)|U| ) \notag \\
 & \le \bb P(e(U,U^c)-|\partial U| \ge (1+\eta/2)|U|). \label{BUAUcbd}
\end{align}
Combining  \eqref{BUAUcbd} with the bound in Lemma \ref{dampen}, we
see that if $|U|=m \le \ep_{4.3}(1+\eta/2)n$, then
\beq  \label{BUAUc}
\bb P(B_U \cap A_U^c)  \le  \exp\left[-(1+\eta/2)m \log(n/m) + \Delta_2 m\right].
\eeq
Using \eqref{BUAUc} and the inequality in Lemma
\ref{subsetsizebd} if $m \le \ep_5 n$, then
\begin{align}
  \bb P(F_1) & \le   {n\choose m} \exp\left[-(1+\eta/2) m \log(n/m) + \Delta_2 m\right] \notag \\
 & \le \exp\left[-(\eta/2) m \log(n/m) +(1+ \Delta_2) m\right].\label{F1bound}
\end{align}
 If $m$ is small enough, then the above estimate is exponentially small, and so with
 high probability there is no subset $U$ of size $m$ for which $B_U
 \cap A_U^c$ occurs.

\mn
 {\it Step II:} Our next step is to estimate the probability that there is a set $U$ of size $m$ for which
 $A_U$ occurs and $e(U_1,U_1^c) \le (r-2-\eta) |U_1|$. If $A_U$ occurs for some subset $U$ of size $m$,
 then $|U_1| \in [\eta m/2r , m]$. Using Lemma \ref{crossedge} with $\alpha=1-(2+\eta)/r$ and
the inequality in Lemma \ref{subsetsizebd},
\begin{align}
 \bb P(F_2) & =  \bb P\left(\cup_{m'\in[\eta
 m/2r , m]} \cup_{\{W:|W|=m'\}} \{e(W,W^c) \le (r-2-\eta) m'\}\right) \notag\\
 & \le \sum_{m' \in [\eta   m/2r , m]} {n\choose m'}
 C_{4.2} \exp\left[-\left(\frac{2+\eta}{2}\right)m'\log(n/m') + \Delta_1 m'\right] \notag\\
 & \le  \sum_{m' \in [\eta m/2r , m]}
C_{4.2} \exp\left(-(\eta/2)m'\log(n/m')+(1+\Delta_1)m'\right).
\label{F2bound1}
 \end{align}
 The function $\phi(\eta)=\eta\log(1/\eta)$ is increasing on
 $(0,1/e)$ (see \eqref{phi}), so if $m \le n/e$ and $m'\in [\eta m/2r, m]$,
$$
m'\log(n/m') \ge (\eta m/2r)\log(2rn/\eta m) \ge (\eta /2r)m
\log(n/m),
$$
since $(\eta/2r)\log(2r/\eta) >0$. Using the facts that there are fewer than $m$ terms in the
sum over $m'$ and the inequality $m \le e^m$ for $m \ge 0$, we have
 \beq\label{F2bound}
 \bb P(F_2) \le C_{4.2}\exp\left(-(\eta^2/4r) m \log(n/ m)+ (2+\Delta_1) m\right).
\eeq when $m \le n/e$.  If $m$ is small enough, then the right-hand
side of \eqref{F2bound} is exponentially small, and so with high
probability there is no subset $U$ of size $m$ for which $A_U$
occurs and $e(U_1,U_1^c) \le (r-2-\eta) |U_1|$.

\mn {\it Step III: } Noting that $U^{*1}$ is a disjoint union of
$U_1$ and $\partial U$ we see that if $B_U$ occurs, then
$$
(r-1-\eta) |U| \ge |U^{*1}| = |U_1| + |\partial U|.
$$
Using $|U| = |U_0|+|U_1|$ now we have
\beq \label{Delta}
 \Delta(U) \equiv |\partial U| - (r-2-\eta)|U_1| -  (r-1-\eta)|U_0| \le
 0.
\eeq
Also  if $|U|=m$, then by the definition of $F_1$,  $B_U \cap F_1^c  \subset B_U \cap A_U $,
 and on the event $A_U \cap F_2^c$, we have $|U_1|\ge (\eta /2r)|U|$ and $e(U_1,U_1^c) > (r-2-\eta)|U_1|$.
Combining these observations,
 \begin{align} \label{step3eq1}
\bb P(B_U \cap F_1^c \cap F_2^c) &\le \bb P(B_U \cap A_U \cap F_2^c) \\
&\le \bb  P( \Delta(U)\le 0 , e(U_1,U_1^c) > (r-2-\eta) |U_1| ). \nonumber
\end{align}
 Now by the definitions of $U_0$ and $U_1$, we have $e(U_0,U^c)=r|U_0|$ and hence
\beq
e(U,U^c)=r|U_0|+e(U_1,U_1^c),
\label{crossedgebreakup}
\eeq
and a little algebra shows that
$$
\{\Delta(U) \le 0\}=  \{e(U,U^c) - |\partial U| \ge (1+\eta)|U_0| + e(U_1,U_1^c) - (r-2-\eta)|U_1|\}.
$$
Also $e(U_1,U_1^c) < r|U_1|$. So
\begin{align}
 & \bb  P( \Delta(U)\le 0 , e(U_1,U_1^c)  > (r-2-\eta) |U_1| ) \label{step3eq2}\\
 & =  \sum_{\gamma \in (0 , 2+\eta)}
\bb P\left(  e(U_1,U_1^c)=(r-2-\eta+\gamma)|U_1| , e(U,U^c) - |\partial U| \ge (1+\eta)|U_0|+\gamma|U_1|
\right).\notag
\end{align}
Combining \eqref{step3eq1} and \eqref{step3eq2}, and recalling that  $|U_1| \in [\eta m/2r , m]$,
\beq
\bb P(B_U \cap F_1^c \cap F_2^c) = \sum_{\gamma \in (0,2+\eta)} \; \;
 \sum_{k \in [\eta m/2r , m]} \bb P(G_{\gamma,k}) \bb P( H_{\gamma} | G_{\gamma,k}
 ),
\label{step3eq3}
\eeq
where $G_{\gamma , k} =  \{e(U_1,U_1^c)=(r-2-\eta+\gamma)|U_1| , |U_1|=k\}$ and
$$
H_{\gamma}  = \{e(U,U^c) - |\partial U|\ge (1+\eta)|U_0|+\gamma|U_1| \}.
$$
In view of \eqref{crossedgebreakup}, if $R=r-2-\eta$ and
$L=(R+\gamma)k+r(m-k)$, then $e(U,U^c)=L$ on $G_{\gamma, k}$. So
$$
\bb P( H_{\gamma} | G_{\gamma,k} ) = \bb P(e(U,U^c)-|\partial U| \ge \gamma
 k+(1+\eta)(m-k) \; | \, e(U,U^c)=L,|U_1|=k).
$$
Since under the conditional distribution $\bb P(\cdot | e(U,U^c)=L)$
all the size-$L$ subsets of half-edges corresponding to $U^c$ are
equally likely to be paired with those corresponding to $U$, the
conditional distribution of $e(U,U^c)-|\partial U|$ given $e(U,U^c)$
and $|U_1|$ does not depend on $|U_1|$. So we can drop the event
$\{|U_1|=k\}$ from the last display and use Lemma \ref{dampen} with
$\eta$ replaced by $\eta' = (\gamma k + (1+\eta)(m-k))/m$ to see
that if $m \le \ep_{4.3}(\eta')n$, then
 \beq \label{s}
\bb P( H_{\gamma} | G_{\gamma,k} ) \le \exp\left(-\{\gamma
 k+(1+\eta)(m-k)\} \log(n/m) + \Delta_2 m\right).
\eeq

In order to estimate $\bb P(G_{\gamma,k})$, we again use \eqref{crossedgebreakup} to conclude
\begin{align*}
\bb P(G_{\gamma,k}) & = \bb P(e(U_1,U_1^c)=(r-2-\eta+\gamma)k , |U_1|=k)\\
 & = \bb P(e(U,U^c)=(r-2-\eta+\gamma)k+r(m-k) , |U_1|=k)\\
 & \le \bb P(e(U,U^c)=rm-(2+\eta-\gamma)k),
\end{align*}
Using Lemma \ref{crossedge} with $\alpha=1-(2+\eta-\gamma)k/rm$,
 \beq \label{r}
\bb P(G_{\gamma,k}) \le C_{4.2}\exp\left(-\frac{2+\eta-\gamma}{2} k
\log(n/m) + \Delta_1 m\right). \eeq Combining \eqref{step3eq3},
\eqref{s} and \eqref{r} if $m \le \ep_0(\eta) n$, where $\ep_0(\eta)
= \min\{ \ep_{4.3}(1+\eta/2), \ep_{4.3}(\eta') \}$, then
\begin{align}
\bb P(B_U \cap F_1^c \cap F_2^c) \le &  \sum_{\gamma \in (0 ,
2+\eta)} \sum_{k \in [\eta m/2r , m]} C_{4.2} \exp(
(\Delta_1+\Delta_2) m )
\nonumber\\
& \quad \cdot \exp\left[-\left\{\left(1+ \frac{\eta+\gamma}{2}\right)k +(1+\eta)(m-k)\right\}\log(n/m) \right].
\end{align}
To simplify the second exponential we drop the $\gamma/2$ from the first term and reduce the
$\eta$ to $\eta/2$ in the second in order to combine them into $(1+\eta/2)m$.
Noting that there are fewer than $rm$ terms in the sum over $\gamma$ and at
 most $m$ terms in the sum over $k$, and using the inequality $m^2
 \le e^m$ for $m \ge 0$, the above is
\begin{align}
 &\le  C_{4.2} r m^2 \exp\left[-(1+\eta/2) m \log(n/m) + (\Delta_1+\Delta_2)
 m\right] \notag \\
 & \le C_{4.2} r \exp\left[-(1+\eta/2) m \log(n/m) + (1+\Delta_1+\Delta_2)
 m\right].\label{B_Ubd1}
\end{align}
 Recalling that $E^{*1}(m , \le (r-1-\eta)m) = \cup_{\{U: |U|= m\}} B_U$ we have
$$
\bb P(E^{*1}(m , \le (r-1-\eta)m)) \le \bb P(F_1)+\bb
P(F_2)+\sum_{\{U: |U|= m\}}
 \bb P(B_U\cap F_1^c\cap F_2^c).
$$
Combining \eqref{F1bound}, \eqref{F2bound} and \eqref{B_Ubd1}, and
using $\binom{n}{m} \le \exp(m\log(n/m)+m)$ from Lemma
\ref{subsetsizebd} we see that  the above is
\begin{align}
\le & \exp[-(\eta/2) m \log(n/m) + (1+\Delta_2) m]
\notag\\
& + C_{4.2}\exp\left[-(\eta^2/4r) m \log(n/ m)+ (2+\Delta_1)
m\right]
\notag\\
& + C_{4.2} r \exp\left[-(\eta/2) m\log(n/m) + (2+\Delta_1+\Delta_2)
m \right]
\notag\\
& \le C \exp[-(\eta^2/4r) m\log(n/m) + (2+\Delta_1+\Delta_2) m],
\label{Hbd1}
\end{align}
which is the desired result.
\end{proof}

\section*{Acknowledgement}

The authors would like to thank the referee for Annals of Probability whose many insightful comments allowed us
to strengthen our results and considerably simplify their proofs. We would like to also think the two referees
at Stochstic Processes and their Applications, who help us to clean up the exposition, and inspired us to
generalize the results to $\theta\ge2$.

\section*{References}

\frenchspacing

\mn
Balogh, J., and Pittel, B.G. (2006) Bootstrap percolation on the random regular graph.
{\it Rand. Struct. Alg.}

\mn
Balogh, J., Peres, Y., and Pete, G. (2006) Bootstrap percolation on infinite trees and non-amenable graphs.
{\it Comb. Prob. Comput.} 15, 715--730

\mn
C.H. Bennett, and G. Grinstein (1985) Role of irreversibility in stabilizing complex and nonergodic behavior in localy interacting discrete systems. {\it Phys. Rev. Letters.} 55, 657--660

\mn 
Chatterjee, S., and Durrett, R. (2009) Contact Processes on
Random Graphs with Power law Degree Distributions Have Critical
Value 0. {\it Ann. Probab.} 37, 2332-2356.

\mn
Chen, H.N. (1992) On the stability of a population growth model with sexual reproduction on $\ZZ^2$. {\it Annals of Probability.} 20, 232--285

\mn
Chen, H.N. (1994) On the stability of a population growth model with sexual reproduction on $\ZZ^2$. {\it Annals of Probability.} 22, 1195--1226

\mn
da Silva, E.F., and de Oliveira, M.J. (2011) Critical discontinuous phase transition in the threshold
contact process. 44, no. 13, paper 135002

\mn
Durrett, R. (1985) Some peculiar properties of a particle system with sexual reproduction.
in {\it Stochastic Spatial Processes.} Edited by P. Tautu. Lecture Notes in Math 1212, Springer, New York  

\mn 
Durrett, R. (2007) {\it Random graph dynamics.} Cambridge U. Press.

\mn
Durrett, R. (2009) Coexistence in Stochastic Spatial Models. (Wald Lecture Paper).
{\it Ann. Appl. Prob.} 19, 477--496

\mn
Durrett, R., and Gray, L. (1985) Some peculiar properties of a particle system with sexual reproduction. Unpublished manuscript.

\mn
Durrett, R. and Jung, P. (2007) Two phase transitions for the contact
process of small worlds. {\it Stoch. Proc. Appl.} 117, 1910--1927

\mn
Durrett, R., and Levin, S. (1994) The importance of being discrete (and spatial).
{\it Theoret. Pop. Biol.} 46 (1994), 363-394

\mn
Durrett, R. and Neuhauser, C. (1994) Particle systems and reaction-diffusion
equations. {\it Ann. Probab.} 22, 289--333.

\mn
Fontes, L.R.G., and Schonmann, R.H. (2008a) Threshold $\theta \ge 2$ contact processes on homogeneous trees.
{\it Probab. Theory Related Fields.} 141, 513--541

\mn
Fontes, L.R.G. and Schonmann, R.H. (2008b) Bootstrap percolation on homogeneous trees has two phase transitions.
{\it J. Stat. Phys.} 132, 839--861

\mn
Grassberger, P. (1981) Monte Carlo simulations of Schl\"ogl's second model.
{\it Physcis Letters A.} 84, 459--461 

\mn
Grassberger, P. (1982) On phase transitions in Schl\"ogl's second model.
{\it Z. Phys. B} 47, 365--374

\mn
Guo, X. Evans, J.W. and Liu, D.J. (2007) Generic two-phase coexistence, relaxation kinetics,
and interface propagation in the quadratic contact process: Analytic sutides.
{\it Physica A.} 387, 177--201

\mn
Guo, X. Evans, J.W. and Liu, D.J. Generic two-phase coexistence, relaxation kinetics,
and interface propagation in the quadratic contact process: Simulation studies.
{\it Phys. Rev. E.} 75, paper 0611129

\mn
Guo, X., Liu, D.J. and Evans, J.W. (2009) Schloegl's second model for autocatlaysis with particle 
diffusion: lattice-gas realization exhibiting generic two-phase coexistence.
{\it J. Chem. Phys.} 130, paper 074106

\mn
Handjani, S. (1997) Survival of threshold contact processes. {\it J. Theoret. Probab.} 10, 737--746

\mn 
Janson, S., {\L}uczak, T. and Ruci{\'n}ski, A. (2000) {\it
Random graphs.} John Wiley and Sons, New York

\mn
Liggett, T.M. (1985) {\it Interacting Particle Systems.} Springer-Verlag, New York

\mn
Liggett, T.M. (1994) Coexistence in threshold voter models. {\it Ann. Probab.} 22, 764--802

\mn
Liggett, T.M. (1999) {\it Stochastic interacting systems: contact, voter, and
exclusion processes.} Springer-Verlag, New York

\mn
Liu, D.J. (2009) Generic two-phase coexistence and nonequilibrium criticality in a lattice
version of Schl\"ogl's second model for autocatlysis. {\it Jouranl Statistical Physics.} 135: 77--85

\mn
Liu, D.J., Guo, X., and  Evans, J.W.  (2007) Quadratic contact process: Phase separation with
interface-orientation-dependent equistability. {\it Phys. Rev. Letters.} 98, paper 050601

\mn
Lubetzky, E. and Sly, A. (2010) Cutoff phenomena for random walks on random regular graphs.
{\it Duke Math. J.} 153), 475–-510

\mn
Mountford, T., and Schonmann, R.H. (2008) The survival of large dimensional threshold contact processes.
{\it Ann. Probab.} 37 (2009), 1483-1501

\mn
Prakash, S. and Nicolis, G. (1997) Dynamics of Schl\"ogl models on lattices of low spatial dimension.
{\it Journal of Statistical Physics.} 86, 1289--1311

\mn
Schl\"ogl, F. (1972) Chemical reaction models for non-equilibrium phase transitions. Z. Physik. 253, 147--161 

\mn
Toom, A.L. (1974) Nonergodic multidimensional systems of automata. {\it Probl. Inform. Transmission} 10, 239--246

\mn
Toom, A.L. (1980) Stable and attractive trajectories in multicomponent systems. Pages 549--575 in
{\it Multicomponent Random Systems.} Dekker, New York

\mn
Watts, D.J., and Strogatz, S.H. (1998) Collective dynamics of `small-world' networks. {\it Nature.} {\bf 393}, 440--442

\end{document}